\newenvironment{manualtheorem}[1]{%
	\manualtheoreminner
}{\endmanualtheoreminner}
\newtheorem{theorem}{Theorem}[section]
\newtheorem{proposition}[theorem]{Proposition}
\newtheorem{corollary}[theorem]{Corollary}
\newtheorem{lemma}[theorem]{Lemma}
\newtheorem{conjecture}[theorem]{Conjecture}
\theoremstyle{definition}
\newtheorem*{introductiondefinition}{Main Definition}
\newtheorem{definition}[theorem]{Definition}
\newtheorem{remark}[theorem]{Remark}
\newtheorem{example}[theorem]{Example}
\newtheorem{question}[theorem]{Question}
\DeclareMathOperator{\kk}{\mathbbm{k}}
\DeclareMathOperator{\curv}{curv}
\DeclareMathOperator{\Tor}{Tor}
\def\blfootnote{\gdef\@thefnmark{}\@footnotetext}
\title{Graded algebras with cyclotomic Hilbert series}
\date{}
\author{Alessio Borz\`{i}, Alessio D'Al\`{i}}
\begin{document}
	
	\maketitle
	
	\blfootnote{\textup{2020} \textit{Mathematics Subject Classification}. Primary: 13D40; Secondary: 13A02, 16S37, 20M14, 13H10.}
	
	\begin{abstract}
		Let $R$ be a positively graded algebra over a field. We say that $R$ is \emph{Hilbert-cyclotomic} if the numerator of its reduced Hilbert series has all of its roots on the unit circle. Such rings arise naturally in commutative algebra, numerical semigroup theory and Ehrhart theory. If $R$ is standard graded, we prove that, under the additional hypothesis that $R$ is Koszul or has an irreducible $h$-polynomial, Hilbert-cyclotomic algebras coincide with complete intersections. In the Koszul case, this is a consequence of some classical results about the vanishing of deviations of a graded algebra. 
	\end{abstract}
	
	\section{Introduction}
	The \emph{Hilbert series} of a positively graded $\kk$-algebra $R$ is a prominent object in commutative algebra. It encodes the information on how many forms of degree $d$ are contained in $R$ for each possible $d$, and has been the object of intense study since the late nineteenth century. Some simple inductive reasoning shows that the Hilbert series can be expressed as a rational function. Many properties of the graded algebra are reflected into the\footnote{In this paper, when not specified differently, we express the Hilbert series of a graded algebra $R$ as a rational function \emph{reduced to lowest terms}; in particular, it makes sense to speak of \emph{the} numerator.} numerator of such expression. In the last few years, several authors have been investigating the behaviour of the roots of this polynomial \cite{brentiwelker, jochemko, bdgms}, often focusing on the combinatorially interesting case when such roots are all real. For the combinatorial consequences of real-rootedness, we direct the interested reader to the survey \cite{branden}.
	
	The focus of the present article is on those graded algebras whose Hilbert series numerator has all of its roots on the unit circle.
	
	\begin{introductiondefinition}[Definition \ref{def:hilbert-cyclotomic}]
		Let $R$ be a positively graded $\kk$-algebra. We say that $R$ is \emph{Hilbert-cyclotomic} (or simply \emph{cyclotomic}) if the numerator of its reduced Hilbert series is Kronecker, i.e.~has all of its roots on the unit circle.
	\end{introductiondefinition}
	
	% 	When this is the case, we say that $R$ is \emph{Hilbert-cyclotomic} (or, if no confusion arises, just \emph{cyclotomic}).
	Cyclotomic graded algebras have been considered in lattice polytope theory, where they are related to Ehrhart-positivity \cite{liu2019positivity, braun2019h}, and numerical semigroup theory \cite{ciolan2016cyclotomic, moree2014numerical,herrera2018coefficients, borzi2020cyclotomic, sawhney2018symmetric}. Moreover, every graded complete intersection is cyclotomic (see the discussion before Proposition \ref{prop:cyclotomic_ci} for a more precise statement).
	
	Generally speaking, the cyclotomic condition cannot be enough to characterize complete intersections: for instance, Gr\"obner deformation preserves the Hilbert series but not necessarily the complete intersection property. Even under the stronger hypothesis that the given algebra is a graded Cohen--Macaulay domain, there exist cyclotomic algebras which fail to be complete intersections, as shown by Stanley \cite[Example 3.9]{stanley1978hilbert} (see also Example \ref{ex:2dim} herein). However, if we restrict our focus to \emph{numerical semigroup rings}, it is yet unknown whether cyclotomic algebras and complete intersections coincide, as we now explain.
	
	A numerical semigroup $S$ is an additive submonoid of $\mathbb{N}$ with finite complement $\mathbb{N} \setminus S$. For an introduction to numerical semigroups, see \cite{rosales2009numerical}. %The Hilbert series of its semigroup ring $\kk[S]$ is $H(\kk[S],x) = \sum_{s \in S} x^s$, whereas 
	The \emph{semigroup polynomial} of $S$ is defined as $P_S(x) = 1 + (x-1) \sum_{g \in \mathbb{N}\setminus S} x^g$. It is an easy exercise to check that the semigroup polynomial $P_S(x)$ is the numerator of the reduced Hilbert series of the semigroup ring $\kk[S]$. Ciolan, Garc\'{i}a-S\'{a}nchez and Moree \cite{ciolan2016cyclotomic} call the numerical semigroup $S$ \emph{cyclotomic} if $P_S(x)$ has all of its roots in the unit circle, i.e.~the ring $\kk[S]$ is Hilbert-cyclotomic. The original motivation for this notion comes from the following folklore result in number theory (see for instance \cite[Theorem 1]{moree2014numerical}): if $p$ and $q$ are distinct primes and $\langle p,q \rangle$ is the numerical semigroup generated by $p$ and $q$, then $P_{\langle p,q \rangle}(x) = \Phi_{pq}(x)$, where $\Phi_n(x)$ is the $n$-th cyclotomic polynomial. More generally, if $a$ and $b$ are two coprime integers, then
	$P_{\langle a,b \rangle}(x) = \prod_{n \mid ab,\, n \nmid a, \, n \nmid b} \Phi_n(x).$
	
	%\dali{There should be a sentence here motivating the conjecture, something like ``This result, together with computer-assisted computations, has recently inspired the following conjecture:''}
	
	As we have seen above, if $\kk[S]$ is a complete intersection, then $S$ is cyclotomic. It was verified in \cite{ciolan2016cyclotomic} that every cyclotomic numerical semigroup with Frobenius number up to $70$ is a complete intersection (where the Frobenius number of $S$ is $\max(\mathbb{N} \setminus S)$). This motivates the following conjecture:
	
	\begin{conjecture}[Ciolan, Garc\'{i}a-S\'{a}nchez, Moree \cite{ciolan2016cyclotomic}]\label{conjecture ns}
		A numerical semigroup $S$ is cyclotomic if and only if $\kk[S]$ is a complete intersection.
	\end{conjecture}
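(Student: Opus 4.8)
The plan is to treat the two implications separately, the forward one being routine and the converse containing the real difficulty. For the direction ``$\kk[S]$ a complete intersection $\Rightarrow S$ cyclotomic'': by the gluing description of complete intersection numerical semigroups (see \cite{rosales2009numerical}), if $e(S)\ge 2$ then $S=d_1S_1+d_2S_2$ is a gluing of complete intersection numerical semigroups $S_1,S_2$, and $S$ is built from $\mathbb{N}$ by finitely many gluings. A short computation with Hilbert series — using that the gluing relation has degree $d_1d_2$ in $\kk[S]$ — yields, for any gluing, the factorization
\[
P_S(t)=P_{\langle d_1,d_2\rangle}(t)\cdot P_{S_1}(t^{d_1})\cdot P_{S_2}(t^{d_2}).
\]
Since $P_{\langle d_1,d_2\rangle}$ is a product of cyclotomic polynomials (as recalled in the introduction) and $\Phi_n(t^d)$ is again such a product, an induction on $e(S)$ with base case $P_{\mathbb{N}}=1$ shows that $P_S$ is Kronecker. (This can also be read off the Hilbert series of a complete intersection directly.)

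For the converse I would first reduce to the symmetric case. Since $0\in S$ one has $P_S(0)=P_S(1)=1$, so $P_S$ is not divisible by $\Phi_1=t-1$; hence if $S$ is cyclotomic then, by Kronecker's theorem applied to the monic integer polynomial $P_S$, we get $P_S=\prod_i\Phi_{n_i}$ with every $n_i\ge 2$ (and, from $P_S(0)=1$, no $n_i$ a prime power, so each $n_i\ge 6$). Each $\Phi_{n_i}$ with $n_i\ge 2$ is self-reciprocal, so $P_S$ is self-reciprocal; as $\kk[S]$ is a one-dimensional Cohen--Macaulay graded domain whose $h$-polynomial is $P_S$, the characterization of Gorenstein rings through symmetry of the $h$-vector \cite{stanley1978hilbert} shows that $\kk[S]$ is Gorenstein, i.e.\ $S$ is symmetric (this implication already appears in \cite{ciolan2016cyclotomic}). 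It therefore suffices to prove the conjecture for symmetric $S$, and one can then induct on $e=e(S)$ via the gluing characterization. The base cases $e\le 3$ are unconditional: a symmetric numerical semigroup with $e\le 3$ is a complete intersection, trivially for $e\le 2$ and by a theorem of Herzog for $e=3$. For the inductive step it would be enough to show that a cyclotomic numerical semigroup with $e\ge 4$ is \emph{decomposable}, that is, a gluing $S=d_1S_1+d_2S_2$: granting this, the displayed factorization forces $P_{S_1}(t^{d_1})$ and $P_{S_2}(t^{d_2})$ — and hence $P_{S_1}$ and $P_{S_2}$ — to be Kronecker, so $S_1$ and $S_2$ are cyclotomic of strictly smaller embedding dimension, whence they are complete intersections by the inductive hypothesis and so is $S$.

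I expect the decomposability statement to be the essential obstacle. There is no apparent structural reason why a cyclotomic numerical semigroup of embedding dimension $\ge 4$ should split as a gluing, and establishing it amounts to ruling out cyclotomic numerical semigroups that are indecomposable. Reformulated via Ap\'ery sets: writing $m$ for the multiplicity of $S$, one has $\sum_{w\in\Ap(S,m)}t^w=(1+t+\cdots+t^{m-1})\,P_S(t)$, so $S$ cyclotomic forces this $0$--$1$ polynomial to be a product of cyclotomic polynomials while simultaneously obeying the tiling constraints that characterize Ap\'ery sets; one would have to extract from this a factorization compatible with an iterated gluing. Controlling which $0$--$1$ polynomials arise as products of cyclotomic polynomials is itself a subtle number-theoretic problem, and it is this — not any commutative-algebra difficulty — that leaves the conjecture open. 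It is exactly this point that the present paper circumvents by moving to the standard graded world, where the Koszul hypothesis (through vanishing of deviations) or irreducibility of the $h$-polynomial provides the structural leverage that is unavailable for numerical semigroup rings.
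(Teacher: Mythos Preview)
The statement you are attempting is a \emph{conjecture} in the paper, not a theorem: the paper does not prove it and explicitly presents it as open. There is therefore no ``paper's own proof'' to compare against. What the paper does establish is only the easy implication (Proposition~\ref{prop:cyclotomic_ci}): every graded complete intersection is cyclotomic, read off directly from the Hilbert series $\prod_j(1-x^{m_j})/\prod_i(1-x^{d_i})$. Your gluing argument for this direction is correct and more specific to numerical semigroups, but it is not needed; the one-line Hilbert series computation suffices.

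Your treatment of the converse is honest and accurate, but it is not a proof --- and you say so yourself. The reductions you make are all valid: $P_S(0)=P_S(1)=1$ rules out $\Phi_1$ and prime-power $\Phi_{p^k}$ as factors; palindromicity of $P_S$ plus Stanley's theorem gives Gorensteinness, hence symmetry of $S$ (this is exactly Proposition~\ref{prop:cyclotomic implies Gorenstein} in the paper); and the base cases $e(S)\le 3$ are classical. The genuine gap is precisely the one you isolate: there is no argument showing that a cyclotomic numerical semigroup with $e(S)\ge 4$ must be decomposable as a gluing. This is not a technical detail to be filled in --- it is the entire content of the conjecture, and it remains open. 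Your reformulation via the Ap\'ery polynomial $\sum_{w\in\Ap(S,m)}t^w$ being a $0$--$1$ product of cyclotomic polynomials is a reasonable way to think about it, but it does not bring the problem closer to a solution.

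In short: the forward implication is fine and matches the paper; the converse is not proved, the paper does not claim to prove it, and your proposal correctly identifies why.
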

	
	% 	The inspiration for the present paper stems mainly from Conjecture \ref{conjecture ns}. First of all, we want to put the cyclotomic condition in a more general algebraic context and to explore its connections to well-studied concepts in commutative algebra and combinatorics. Due to our desire to bring together different mathematical communities, the paper is hence partly expository. A good deal of the material we include here for the reader's sake can be found for instance in \cite[Section 4.4]{bruns1998cohen}.
	
	Now, let us go back to the more general setting where $R$ is a positively graded $\kk$-algebra. In the spirit of Conjecture \ref{conjecture ns} and of a question by Stanley \cite[p.~64]{stanley1978hilbert}, it is of interest to find additional hypotheses under which the cyclotomic condition for $R$ becomes equivalent to being a complete intersection. The main result of this paper shows that this is the case for \emph{Koszul algebras}, a class of quadratic standard graded algebras enjoying many desirable homological properties (for an overview, we refer the interested reader to, e.g., \cite{concadenegrirossi} and \cite{froberg1999koszul}).
	
	\begin{manualtheorem}{A}[Theorem \ref{thm:cyclotomic_koszul}]
		If $R$ is a Koszul algebra, then $R$ is Hilbert-cyclotomic if and only if it is a complete intersection.
	\end{manualtheorem}
	
	Moreover, we prove that complete intersections and cyclotomic algebras coincide also under the assumption that $R$ is standard graded and its $h$-polynomial is irreducible over $\mathbb{Q}$. This is in line with a result in the forthcoming article \cite{borzi2020cyclotomic}: see Question \ref{question:irreducible_conjecture} and the discussion preceding it.
	Recall that the Kronecker polynomials which are irreducible over $\mathbb{Q}$ are precisely the cyclotomic polynomials $\Phi_m(x)$.
	
	\begin{manualtheorem}{B}[Theorem \ref{thm:cyclotomic_standard_graded}]
		Let $R$ be a standard graded algebra. Then $h(R, x) = \Phi_m(x)$ if and only if $m$ is prime and $R$ is a hypersurface of degree $m$.
	\end{manualtheorem}

	\section{Hilbert-cyclotomic algebras}\label{sec:cyclotomic}
	
    In this paper, a \emph{graded algebra} will always be a commutative finitely generated $\mathbb{N}$-graded algebra $R = \bigoplus_{i \in \mathbb{N}}R_i$ with $R_0 = \kk$, where $\kk$ is a field. We will write the reduced Hilbert series of $R$ as $H(R, x) = N_R(x)/D_R(x)$. If $R$ is generated by its degree 1 part, we will say that $R$ is \emph{standard graded}. In this case, as is customary, we will call $N_R$ the \mbox{\emph{$h$-polynomial}} of $R$ and denote it by $h(R,x)$.
	
	%From Hilbert's syzygy theorem \cite[Theorem 1.13]{eisenbud2013commutative}, $R$ has a finite minimal graded free resolution as an $S$-module
	%\begin{equation}\label{finite free resolution}
	%0 \rightarrow \dots \rightarrow \bigoplus_{j} S(-j)^{\beta_{i,j}^S(R)} \rightarrow \dots \rightarrow \bigoplus_{j} S(-j)^{\beta_{1,j}^S(R)} \rightarrow S \rightarrow R \rightarrow 0
	%\end{equation}
	%where $\beta_{i,j}^S(R)$ is the $(i,j)$-th \emph{graded Betti number} of $R$ as an $S$-module (note that $\beta_{0,0}^S(R) = 1$). By \eqref{finite free resolution} and the additivity of the Hilbert series, one has that the Hilbert series of $R$ can be written in the form
	%\begin{equation}\label{numerator of Hilbert}
	%H(R,x) = \frac{\mathcal{K}(R,x)}{(1-x^{d_1}) \dots (1-x^{d_n})}
	%\end{equation}
	%where
	%\[ \mathcal{K}(R,x) = \sum_{i,j} (-1)^i \beta_{i,j}^S(R) x^j \in \mathbb{Z}[x]. \]
	%Using the same terminology as in \cite[Definition 1.12]{miller2004combinatorial}, we call $\mathcal{K}(R,x)$ the \emph{K-polynomial} of $R$.
	
	%We now introduce the main object of study of this paper.
	
	\begin{definition}\label{def:hilbert-cyclotomic}
		Let $R$ be a graded algebra. We say that $R$ is \emph{Hilbert-cyclotomic} (or simply \emph{cyclotomic}) if the numerator of its reduced Hilbert series is Kronecker, i.e.~has all of its roots on the unit circle.
	\end{definition}
	
	\begin{remark}
		Besides the connection to numerical semigroups highlighted in the introduction, the cyclotomic condition has also been studied in Ehrhart theory. Braun and Liu \cite{liu2019positivity,braun2019h} call a polytope \emph{$h^*$-unit-circle-rooted} if its Ehrhart ring is Hilbert-cyclotomic. Remarkably, such polytopes are Ehrhart-positive \cite[Corollary 1.4]{braun2019h}.
	\end{remark}
	
	A degree $s$ polynomial $f(x) = \sum_{i=0}^s a_i x^i$ with integer coefficients is said to be \emph{palindromic} if $a_i = a_{s-i}$ for every $i \in \{ 0,\dots,s \}$, or equivalently if $f(x) = x^s f(1/x)$. For every $n>1$, one has that the cyclotomic polynomial $\Phi_n(x) := \prod_{(j,n) = 1} \left( x - e^{2 \pi i j / n} \right)$ is palindromic. Since a Kronecker polynomial is a product of cyclotomic polynomials and the palindromic property is preserved under taking products, it follows that every Kronecker polynomial $f$ with $f(1) \neq 0$ is palindromic.
	
	Now let $R$ be a graded algebra of Krull dimension $d$. Since the order of the pole of $H(R,x)$ at $x=1$ equals $d$ (see for instance \cite[Chapter 11]{atiyah2016commutative}), it follows that $N_R(1) \neq 0$. Thus, from the above observations we infer that
	
	\begin{remark} \label{rem:cyclotomic_palindromic}
		If the graded algebra $R$ is cyclotomic, then the numerator $N_R$ of its reduced Hilbert series is palindromic.
	\end{remark}
	
	The rest of this section is devoted to a quick exploration of how the cyclotomic condition relates to complete intersections and Gorenstein algebras, as summarized by the following diagram:
	
	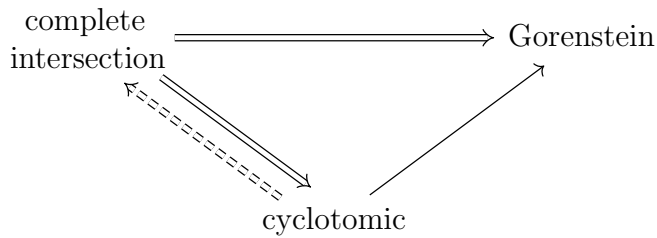
\begin{figure}[htp]
	\captionsetup{width=0.8\textwidth}
		%\begin{center}
		\[
		\begin{tikzcd}
			\text{\parbox{2cm}{\centering complete \\ intersection}}
			\arrow[Rightarrow]{rr}
			\arrow[Rightarrow, shorten <=5pt, shorten >=-2pt, shift left=5pt]{dr}
			&&
			\text{Gorenstein}
			\\[0.8cm]
			& \text{cyclotomic}
			\arrow[Rightarrow, dashed, shorten <=5pt, shorten >=-5pt, shift left=5pt]{ul}
			\arrow[shorten >=5pt]{ur}
			&
		\end{tikzcd}
		\]
		%\end{center}
		\caption{The single arrow holds for Cohen-Macaulay domains (Proposition \ref{prop:cyclotomic implies Gorenstein}), whereas the dashed arrow is the subject of Conjectures \ref{conjecture ns} and \ref{conj:ns_algebraic} and holds for standard graded algebras that are Koszul (Theorem \ref{thm:cyclotomic_koszul}) or have irreducible $h$-polynomial (Theorem \ref{thm:cyclotomic_standard_graded}).}

	\end{figure}
	
	%\borzi{
	%Where the single arrow holds true for Cohen--Macaulay domains. The dashed arrow for numerical semigroup rings is Conjecture \ref{conjecture ns}, whereas it holds true for standard graded algebras that are Koszul or with irreducible $h$-polynomial, as we are going to prove in the next sections. Further, all the converse implications in the diagram are false, even for Cohen--Macaulay domains.
	%}
	
	%Note that, if $S$ is a numerical semigroup and $R = \kk[S]$, then the Hilbert function of $R$ is eventually equal to one. This implies that $N_R(x) = P_S(x)$. In particular, $S$ is a cyclotomic numerical semigroup if and only if $\kk[S]$ is cyclotomic.
	
	Let $R$ be a (graded) complete intersection, i.e.~a quotient of a positively graded polynomial ring $S = \kk[x_1,\dots,x_n]$ by an ideal generated by a homogeneous regular sequence $f_1, \dots, f_e$. Setting $d_i := \deg x_i$ and $m_j := \deg f_j$, one shows that the Hilbert series of $R$ can be written in the (non-reduced) form
	\[ H(R,x) = \frac{(1-x^{m_1}) \dots (1-x^{m_e})}{(1-x^{d_1}) \dots (1-x^{d_n})},\]
	see for instance \cite[Corollary 3.3]{stanley1978hilbert}. Hence, it follows that
	
	\begin{proposition} \label{prop:cyclotomic_ci}
		Every graded complete intersection is cyclotomic.
	\end{proposition}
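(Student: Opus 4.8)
The plan is to start from the factored Hilbert series expression for a graded complete intersection that has just been recalled, namely
\[
H(R,x) = \frac{(1-x^{m_1}) \cdots (1-x^{m_e})}{(1-x^{d_1}) \cdots (1-x^{d_n})},
\]
and observe that this is a quotient of products of \emph{cyclotomic polynomials}: indeed, $1-x^k = -\prod_{n \mid k} \Phi_n(x)$ up to sign, or more precisely $x^k - 1 = \prod_{n \mid k}\Phi_n(x)$ so that $1 - x^k = -\prod_{n\mid k, n>1}\Phi_n(x)$ when we pull out the $\Phi_1(x)=x-1$ factor. The key point is that every factor appearing in numerator and denominator is a product of cyclotomic polynomials, so after cancelling common factors to pass to the reduced form $N_R(x)/D_R(x)$, the numerator $N_R(x)$ is (up to a unit in $\mathbb{Q}$, which must in fact be $\pm 1$ since everything has integer coefficients and is monic up to sign) a product of cyclotomic polynomials. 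Since a product of cyclotomic polynomials is by definition a Kronecker polynomial, $R$ is cyclotomic.

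The steps, in order, are: first, write $1-x^{m_j} = -\prod_{t \mid m_j,\ t > 1}\Phi_t(x)$ and $1-x^{d_i} = -\prod_{t\mid d_i,\ t>1}\Phi_t(x)$, using the standard identity $x^k-1 = \prod_{t\mid k}\Phi_t(x)$ together with $\Phi_1(x) = x-1$. Second, substitute into the displayed formula for $H(R,x)$; the signs $(-1)^e$ and $(-1)^n$ are harmless as $H(R,x)$ is a well-defined rational function with a positive leading behaviour near $x=0$ (it equals $1$ at $x=0$), so the overall sign is forced to be $+1$. Third, note that $\mathbb{Q}[x]$ is a UFD and the $\Phi_t(x)$ are distinct irreducible (over $\mathbb{Q}$) polynomials, so cancelling to lowest terms leaves a numerator $N_R(x)$ that is a product (with multiplicities) of some subcollection of the $\Phi_t(x)$ appearing in the numerator but not fully cancelled by the denominator. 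Fourth, conclude that $N_R(x)$ is Kronecker by definition, hence $R$ is Hilbert-cyclotomic.

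I expect the only mildly delicate point — hardly an obstacle — to be the bookkeeping of signs and the justification that passing from the non-reduced expression to the reduced one is simply cancellation of cyclotomic factors in the UFD $\mathbb{Q}[x]$; one must make sure that the numerator of the \emph{reduced} series, which is what Definition \ref{def:hilbert-cyclotomic} refers to, still consists only of cyclotomic factors. This is immediate from unique factorization: the reduced numerator divides the non-reduced numerator in $\mathbb{Q}[x]$ (indeed in $\mathbb{Z}[x]$, by Gauss's lemma, since all polynomials involved are monic), and every monic divisor of a product of cyclotomic polynomials is again a product of cyclotomic polynomials. Everything else is formal, so the proof is short.
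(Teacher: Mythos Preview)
Your proposal is correct and follows essentially the same approach as the paper, which simply displays the non-reduced Hilbert series as a quotient of products of polynomials $(1-x^k)$ and then states the proposition without further comment; you have merely spelled out the implicit ``hence''. (One harmless slip: pulling out $\Phi_1(x)=x-1$ from $-(x^k-1)$ yields $1-x^k=(1-x)\prod_{t\mid k,\,t>1}\Phi_t(x)$, not $-\prod_{t\mid k,\,t>1}\Phi_t(x)$; as you already note, the overall sign is forced by $H(R,0)=1$, so this does not affect the argument.)
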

	
	The converse of Proposition \ref{prop:cyclotomic_ci} does not hold even under the hypothesis that the given algebra is a Cohen--Macaulay standard graded domain, as already observed by Stanley \cite[Example 3.9]{stanley1978hilbert}. Examples of cyclotomic non-complete intersection Cohen--Macaulay standard graded domains can be found in any dimension $d \geq 2$: it is enough to adjoin variables to the following example provided by Aldo Conca.
	
	\begin{example} \label{ex:2dim}
		Let $R = \kk[s^8, s^6t^2, s^5t^3, s^3t^5, t^8] \subseteq \kk[s, t]$. Then $R$ is a \mbox{2-dimensional} standard graded domain which is Cohen--Macaulay in every characteristic (one checks via some characteristic-free Gr\"obner basis computation that the system of parameters $\{s^8, t^8\}$ is a regular sequence for $R$). Since the $h$-polynomial of $R$ is $(1+x)^3$, one has that $R$ is cyclotomic; however, it is not a complete intersection.
	\end{example}
	
	\begin{comment}
	
	Note that, in the standard graded case (and under the hypothesis that the field $\kk$ is algebraically closed), the existence of such examples allows us to find one in each dimension $d \geq 2$. Indeed, let $R$ be a standard graded Cohen-Macaulay domain of dimension two or higher. By using Bertini's theorem from algebraic geometry, in the weaker algebraic version of \cite[Corollary 3.3]{stanley1991hilbert}, we can always find a suitable regular sequence of linear forms such that the quotient of $R$ by this regular sequence is a Cohen-Macaulay domain of dimension two with the same $h$-polynomial as $R$. Once we have such a quotient, we can just adjoin new variables to get higher-dimensional examples with the same $h$-polynomial. In particular, this procedure yields in each dimension $d \geq 2$ a standard graded Cohen--Macaulay domain that is cyclotomic but not a complete intersection.
	
	\begin{example} \label{ex:3dim}
	For any choice of a field $\kk$, the toric algebra \[R = \kk[x^2, y^2, z^2, xy^{-1}z^2, x^2y^2z^{-2}, x^4z^{-2}]\]
	is a standard graded (after rescaling the degrees) 3-dimensional domain which is not a complete intersection and is cyclotomic, since $h(R, x) = (1+x)^3$. If $\kk = \mathbb{C}$, then $R$ is Cohen--Macaulay, as can be seen by applying the Auslander--Buchsbaum formula.
	\end{example}
	
	\end{comment}
	
	%We now discuss the case of not necessarily standard graded domains of Krull dimension one (which are Cohen--Macaulay by definition).
	It turns out that investigating the case of one-dimensional domains (not necessarily standard graded) is essentially equivalent to solving Conjecture \ref{conjecture ns}. In fact, from \cite[Proposition 3.1]{stanley1991hilbert} every graded domain of Krull dimension one over an algebraically closed field $\kk$ is isomorphic to the semigroup algebra $\kk[\Gamma]$ of some additive submonoid $\Gamma$ of $\mathbb{N}$. Further, every submonoid $\Gamma$ of $\mathbb{N}$ is isomorphic to the numerical semigroup $S = \Gamma/\gcd(\Gamma)$ \cite[Proposition 2.2]{rosales2009numerical}. Hence, we can reformulate Conjecture \ref{conjecture ns} in purely algebraic terms:
	
	\begin{conjecture}[Conjecture \ref{conjecture ns}, algebraic version] \label{conj:ns_algebraic}
		Every cyclotomic graded domain of Krull dimension one over an algebraically closed field is a complete intersection.
	\end{conjecture}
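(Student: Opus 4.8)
Since the final statement is the open Conjecture \ref{conj:ns_algebraic}, what follows is a strategy rather than a complete argument; I will indicate precisely where I expect the essential obstruction to sit. The plan begins with the reduction already sketched in the surrounding text: by \cite[Proposition 3.1]{stanley1991hilbert} a graded domain of Krull dimension one over an algebraically closed field $\kk$ is isomorphic to a semigroup algebra $\kk[\Gamma]$ with $\Gamma \subseteq \mathbb{N}$, and by \cite[Proposition 2.2]{rosales2009numerical} we may rescale so that $\Gamma$ is a numerical semigroup $S$. Both steps are isomorphisms, hence preserve the reduced Hilbert series (so the cyclotomic property) and the complete intersection property. Thus it suffices to prove Conjecture \ref{conjecture ns}: if $S$ is a cyclotomic numerical semigroup, then $\kk[S]$ is a complete intersection.

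The next step is to harvest the structural consequences of the cyclotomic hypothesis that come for free. By Remark \ref{rem:cyclotomic_palindromic}, the semigroup polynomial $P_S = N_{\kk[S]}$ is palindromic. Since $\kk[S]$ is a one-dimensional Cohen--Macaulay domain, a symmetric $h$-vector forces it to be Gorenstein; equivalently, $S$ is a symmetric numerical semigroup. This places us on the upper edge of the diagram above and reduces the conjecture to upgrading Gorenstein to complete intersection, using the strictly stronger information that \emph{all} roots of $P_S$, not merely the coefficient pattern, are controlled.

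To attempt this upgrade I would exploit the classical characterization of complete intersection numerical semigroups as those obtained from $\mathbb{N}$ by iterated gluings (see \cite{rosales2009numerical}), together with the multiplicativity of the semigroup polynomial under gluing: if $S = \alpha S_1 + \beta S_2$ is a gluing, with $\gcd(\alpha,\beta) = 1$, $\alpha \in S_2$ and $\beta \in S_1$, then
\[ P_S(x) = P_{S_1}(x^{\alpha}) \, P_{S_2}(x^{\beta}) \, P_{\langle \alpha, \beta \rangle}(x). \]
Because $x \mapsto x^{k}$ sends roots of unity to roots of unity and $P_{\langle \alpha, \beta \rangle}$ is itself Kronecker, this identity re-proves Proposition \ref{prop:cyclotomic_ci} in the one-dimensional case by induction, and isolates exactly the mechanism one wants to run backwards. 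The target is therefore an induction on the genus: show that every cyclotomic $S \neq \mathbb{N}$ admits a nontrivial gluing into cyclotomic pieces $S_1, S_2$, then conclude. The two-generated identity $P_{\langle a,b \rangle}(x) = \prod_{n \mid ab,\, n \nmid a,\, n \nmid b} \Phi_n(x)$ recalled in the introduction is the base case, where the factorization of $P_S$ into cyclotomic polynomials transparently mirrors the gluing data.

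The hard part, and the reason the statement remains conjectural, is producing the gluing from the cyclotomic data. The hypothesis is a condition on the numerator of the Hilbert series, i.e.~on the gap set read off through $P_S(x) = 1 + (x-1)\sum_{g \in \mathbb{N}\setminus S} x^g$, whereas a gluing is a condition on the minimal generators and the defining toric ideal of $\kk[S]$. There is no known way to read a partition of the generators realizing a gluing off the distribution of the roots of $P_S$, nor to guarantee that the cyclotomic factors assemble into the substituted polynomials $P_{S_i}(x^{\alpha_i})$ that an actual gluing demands. I expect that closing this gap requires a genuinely arithmetic input linking cyclotomic factorizations to the multiplicative combinatorics of gluings, extending the clean $\Phi_{pq}$ phenomenon of the two-generated case to arbitrarily many generators; without such an input, the symmetric (Gorenstein) condition cannot be forced up to a complete intersection by the root hypothesis alone, which is precisely why the problem resists a direct attack.
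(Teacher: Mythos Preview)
The statement you were asked to address is a \emph{conjecture}, and the paper does not prove it: the surrounding text merely explains why Conjecture~\ref{conj:ns_algebraic} is equivalent to Conjecture~\ref{conjecture ns}, via the reduction through \cite[Proposition~3.1]{stanley1991hilbert} and \cite[Proposition~2.2]{rosales2009numerical}. You reproduce exactly this reduction in your first paragraph, and you are explicit that what follows is a strategy rather than a proof, so on the comparison front there is nothing to fault---your reduction matches the paper's, and the paper offers nothing further.

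Your subsequent discussion (palindromicity $\Rightarrow$ Gorenstein, the gluing characterization of complete intersection numerical semigroups, multiplicativity of $P_S$ under gluing, and the identification of the obstruction as the inability to read a gluing off the cyclotomic factorization) is a reasonable and honest outline of where the problem stands, and goes well beyond anything the paper attempts. It is worth noting that the paper's own contributions (Theorems~\ref{thm:cyclotomic_koszul} and~\ref{thm:cyclotomic_standard_graded}) concern \emph{standard graded} algebras and therefore do not touch the numerical semigroup case, where the grading is almost never standard; so neither the Koszul route nor the irreducible-$h$-polynomial route offers leverage here. Your assessment that the gap is genuinely arithmetic---linking cyclotomic factorizations of $P_S$ to gluing data on the generators---is consistent with the state of the literature.
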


	We close this section by discussing the relation between the cyclotomic condition and the Gorenstein property. A famous theorem by Stanley \cite[Theorem 4.4]{stanley1978hilbert} states that a Cohen--Macaulay graded domain $R$ is Gorenstein if and only if $N_R(x)$ is palindromic. Recalling Remark \ref{rem:cyclotomic_palindromic}, we hence obtain the following result:
	
	\begin{proposition}\label{prop:cyclotomic implies Gorenstein}
		Let $R$ be a Cohen--Macaulay graded domain. If $R$ is cyclotomic, then $R$ is Gorenstein.
	\end{proposition}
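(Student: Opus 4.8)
The plan is to obtain the proposition as an immediate consequence of the two facts recalled just above its statement. First I would apply Remark \ref{rem:cyclotomic_palindromic}: since $R$ is assumed to be cyclotomic, the numerator $N_R$ of its reduced Hilbert series is palindromic. Concretely, $N_R$ is by definition a Kronecker polynomial, and since $\dim R$ equals the order of the pole of $H(R,x)$ at $x=1$ we have $N_R(1) \neq 0$; hence $N_R$ is a product of cyclotomic polynomials $\Phi_n$ with $n>1$, each of which is palindromic, and palindromicity is preserved under taking products.

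Next I would invoke Stanley's theorem \cite[Theorem 4.4]{stanley1978hilbert}, in the form recalled above: a Cohen--Macaulay graded domain is Gorenstein if and only if the numerator of its reduced Hilbert series is palindromic. Since $R$ is by hypothesis a Cohen--Macaulay graded domain and we have just shown that $N_R$ is palindromic, the theorem yields that $R$ is Gorenstein. Chaining the two implications completes the argument.

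There is no genuine obstacle here: the content of the statement is carried entirely by the two cited results, and the proof is a one-line combination. The only thing to be careful about is the bookkeeping, namely checking that our standing hypotheses on $R$ (finitely generated $\mathbb{N}$-graded, $R_0 = \kk$, Cohen--Macaulay, domain) match exactly those under which Stanley's characterization is proved, and that the symmetry of the reduced numerator $N_R$ is indeed the reformulation of Stanley's functional equation one needs. One may also note in passing that the converse fails — a Gorenstein algebra only forces $N_R$ to be palindromic, not Kronecker (e.g.\ a palindromic $h$-polynomial with roots off the unit circle) — which is why the corresponding arrow in the diagram is one-directional.
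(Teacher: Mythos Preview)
Your proposal is correct and follows exactly the paper's approach: combine Remark~\ref{rem:cyclotomic_palindromic} (cyclotomic forces $N_R$ to be palindromic) with Stanley's characterization \cite[Theorem 4.4]{stanley1978hilbert} of Gorenstein Cohen--Macaulay graded domains via palindromicity of $N_R$. The paper presents this as a one-line consequence, just as you do.
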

	
	Proposition \ref{prop:cyclotomic implies Gorenstein} generalizes both \cite[Theorem 5]{moree2014numerical} for numerical semigroups %(in light of Theorem \ref{symmetric iff Gorenstein})
	and \cite[Corollary 2.2.9]{liu2019positivity} for lattice polytopes. The converse of Proposition \ref{prop:cyclotomic implies Gorenstein}, however, does not hold even in these more specific settings. One can consider for instance the semigroup rings of the numerical semigroups \mbox{$S_k = \langle k,k+1,\dots,2k-2 \rangle$} with $k \geq 5$ \cite{herrera2018coefficients,sawhney2018symmetric} or Ehrhart rings of Birkhoff polytopes \cite[Section 5.1.2]{liu2019positivity}.

	\section{Cyclotomic Koszul algebras} \label{sec:koszul}
	The goal of this section is to prove that, for Koszul algebras, the cyclotomic property characterizes complete intersections. 
	We begin by some definitions.
	
	\begin{definition}
		Let $R$ be a standard graded $\kk$-algebra. We say that $R$ is \emph{Koszul} if the minimal graded free resolution of $\kk$ as an $R$-module is linear, i.e.~$\Tor^R_i(\kk, \kk)_j = 0$ whenever $i \neq j$.
	\end{definition}
	
	\begin{definition}
		Let $R$ be a graded $\kk$-algebra. The \emph{Poincar\'e series} of $\kk$ as an $R$-module is $\mathcal{P}(R, x) = \sum_{i=0}^{+\infty}\beta^R_i(\kk)x^i$, where $\beta_i^R(\kk) := \dim_{\kk}\Tor^R_i(\kk, \kk)$.
	\end{definition}
	
% 	When $R$ is a Koszul algebra, the Hilbert series of $R$ and the Poincar\'e series of the residue field $\kk$ are tightly related, as shown by the following characterization (see for instance \cite[Theorem 1]{froberg1999koszul}):
	
% 	\begin{proposition}\label{Koszul characterization}
% 	Let $R$ be a standard graded $\kk$-algebra. The following conditions are equivalent:
% 	\begin{enumerate}
% 	\item $R$ is Koszul;
% 	\item $H(R,x) \mathcal{P}(R,-x) = 1$.
% 	\end{enumerate}
% 	\end{proposition}
	
% 	We can now state the main result of this section. 
	
% 	\begin{theorem} \label{thm:cyclotomic_koszul}
% 	If $R$ is a Koszul algebra, then $R$ is cyclotomic if and only if it is a complete intersection.
% 	\end{theorem}
	
% 	The proof of Theorem \ref{thm:cyclotomic_koszul}, which is postponed to the end of the section, will rely on a formal manipulation of power series. In the Koszul case, such a manipulation will allow us to invoke some deep results from homological algebra.
	
	The following remark can be found for instance in \cite[Remark 1]{moree2000automata}.
	
	\begin{remark} \label{rmk:cyclotomic_sequence}
		Given a formal series $P(x) = 1 + \sum_{i=1}^{+\infty}a_ix^i$ with $a_i \in \mathbb{Z}$, there exist unique integers $e_i \in \mathbb{Z}$ such that
		\begin{equation}\label{cyclotomic factorization}
		P(x) = \prod_{i=1}^{+\infty}(1-x^i)^{e_i}.
		\end{equation}
		If $P(x)$ is just a polynomial, then it is Kronecker if and only if $e_i = 0$ for $i \gg 0$. For a proof of this fact, see \cite[Lemma 12]{ciolan2016cyclotomic}.
	\end{remark}
	
	The factorization in \eqref{cyclotomic factorization} was used by Ciolan, Garc{\'i}a-S{\'a}nchez and Moree \cite{ciolan2016cyclotomic} to define the \emph{cyclotomic exponent sequence} of a numerical semigroup. This notion can be generalized as follows.
	
	\begin{definition}
		Let $R$ be a graded algebra and let $N_R(x)$ be the numerator of its reduced Hilbert series. Since $N_R(0) = 1$, we can factor $N_R$ as in \eqref{cyclotomic factorization}. The integers $e_i$ will be called the cyclotomic exponent sequence of $R$ and will be denoted by $e_i(R)$.
	\end{definition}
	
	\begin{comment}
	
	The following lemma is essentially \cite[Lemma 12]{ciolan2016cyclotomic} and its proof uses the same argument.
	
	\begin{lemma}\label{rmk:cyclotomic_eventually_zero}
	Let $P(x)$ be a polynomial with integer coefficients such that $P(0)=1$, so we can factor it as in \eqref{cyclotomic factorization}. If $P(1) \neq 0$, then $P$ is Kronecker if and only if $e_i = 0$ for $i \gg 0$.
	\end{lemma}
	\begin{proof}
	\emph{Necessity}. Since $P(1) \neq 0$, from Equation \eqref{cyclotomic binomial product} in Section \ref{sec:tools} we have that $P(x) = \prod_{i=1}^m (1-x^i)^{f_i}$ for some integers $f_i$ and $m$. By the uniqueness of $e_i$ we have $e_i = f_i$, hence $e_i = 0$ for $i \gg 0$.
	
	\emph{Sufficiency}. If $P(x) = \prod_{i=1}^m (1-x^i)^{e_i}$ for some $m$, then all the roots of $P$ lie on the unit circle, that is, $P$ is Kronecker.
	\end{proof}
	
	\end{comment} 
	
	A consequence of Remark \ref{rmk:cyclotomic_sequence} is the following equivalence.
	
	\begin{corollary}\label{cyclotomic iff cyc exp seq 0}
		Let $R$ be a graded algebra. The following conditions are equivalent:
		\begin{enumerate}
			\item $R$ is cyclotomic;
			\item $e_i(R) = 0$ for $i \gg 0$.
		\end{enumerate}
	\end{corollary}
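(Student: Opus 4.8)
The plan is to read the equivalence off directly from Remark~\ref{rmk:cyclotomic_sequence}, so the ``proof'' is really just a matter of unwinding definitions. First I would observe that, by Definition~\ref{def:hilbert-cyclotomic}, condition~(1) is precisely the statement that the polynomial $N_R(x)$ is Kronecker. Next, since the reduced Hilbert series $H(R,x)$ is written in lowest terms and $R_0 = \kk$, the numerator $N_R(x)$ is an honest polynomial with $N_R(0) = 1$; hence the hypotheses of Remark~\ref{rmk:cyclotomic_sequence} are met with $P(x) = N_R(x)$. That remark supplies the unique integers $e_i$ for which $N_R(x) = \prod_{i \geq 1}(1-x^i)^{e_i}$ — these are by definition the cyclotomic exponents $e_i(R)$ — and it asserts that such a polynomial is Kronecker if and only if $e_i = 0$ for $i \gg 0$. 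Concatenating ``(1) $\Leftrightarrow$ $N_R$ is Kronecker'' with ``$N_R$ is Kronecker $\Leftrightarrow$ $e_i(R) = 0$ for $i \gg 0$'' yields exactly the equivalence of (1) and (2).

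I do not anticipate any real obstacle here: the corollary is a repackaging of the cited remark in the language of the cyclotomic exponent sequence of $R$, and the single point worth making explicit is why $N_R$ counts as a polynomial with constant term $1$, which follows from the normalization conventions already fixed in Section~\ref{sec:cyclotomic}. One could additionally note that $N_R(1) \neq 0$, since the order of the pole of $H(R,x)$ at $x=1$ equals the Krull dimension of $R$, but this is not needed, as Remark~\ref{rmk:cyclotomic_sequence} applies to every polynomial with constant term $1$ regardless of its value at $1$.
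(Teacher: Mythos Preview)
Your proposal is correct and matches the paper's approach exactly: the paper presents this corollary as an immediate consequence of Remark~\ref{rmk:cyclotomic_sequence} without any further argument, and you have simply made explicit the definition-chasing that justifies that claim. Your observation that $N_R(1)\neq 0$ is not actually needed here is also accurate, since Remark~\ref{rmk:cyclotomic_sequence} as stated applies to any polynomial with constant term~$1$.
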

	
	\begin{definition}
		Let $R$ be a graded $\kk$-algebra and let $\mathcal{P}(R, x)$ be the Poincar\'e series of the residue field $\kk$ as an $R$-module. We write 
		\[ \mathcal{P}(R,x) = \frac{\displaystyle \prod_{i=1}^{+\infty} (1+x^{2i-1})^{\varepsilon_{2i-1}}}{\displaystyle \prod_{i=1}^{+\infty} (1-x^{2i})^{\varepsilon_{2i}}}\]
		and call the integers $(\varepsilon_i)_{i \in \mathbb{N}}$ so obtained the sequence of \emph{deviations} of $R$.
	\end{definition}
	
	%One of the most relevant features of deviations lies in their ability to detect important homological information on $R$. For instance, $\varepsilon_1(R) = 0$ precisely when $R$ is a field, and $\varepsilon_2(R) = 0$ if and only if $R$ is a polynomial ring. What is crucial to us is that d
	An interesting feature of deviations is their ability to tell whether or not $R$ is a complete intersection. The strongest version of this result is Halperin's rigidity theorem, see for instance \cite[Theorem 7.3.4]{avramov1998infinite}. For our aims, however, a weaker statement originally due to Gulliksen \cite{gulliksen1971ci} will suffice (see also \cite[Theorem 7.3.3]{avramov1998infinite}):
	
	\begin{theorem} \label{thm:gulliksen_eventually_zero}
		Let $R$ be a graded algebra. The following conditions are equivalent:
		\begin{enumerate}
			\item $R$ is a complete intersection;
			\item $\varepsilon_i(R) = 0$ for $i \gg 0$.
		\end{enumerate}
	\end{theorem}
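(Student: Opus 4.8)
The plan is to handle the two implications separately. For $(1) \Rightarrow (2)$, which is classical and short, write $R = Q/(f_1,\dots,f_c)$ with $Q = \kk[x_1,\dots,x_n]$ positively graded and $f_1,\dots,f_c$ a homogeneous $Q$-regular sequence contained in $\mathfrak{m}_Q^2$. I would build Tate's resolution of $\kk$ over $R$: adjoin exterior variables $y_1,\dots,y_n$ of homological degree $1$ with $d y_i = x_i$, so that $R\langle y_1,\dots,y_n\rangle$ is the Koszul complex $K^R$, and then divided-power variables $t_1,\dots,t_c$ of homological degree $2$ whose differentials are cycles of $K^R$ lifting $f_1,\dots,f_c$. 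Tate's theorem says that, \emph{precisely because $f_1,\dots,f_c$ is a regular sequence}, the DG algebra $R\langle y_\bullet; t_\bullet \rangle$ is acyclic, hence a free resolution of $\kk$; since each $f_j$ lies in $\mathfrak{m}_Q^2$ the differential lands in $\mathfrak{m}_R \cdot R\langle y_\bullet; t_\bullet \rangle$, so this resolution is minimal and is therefore the acyclic closure of $\kk$ over $R$. Counting the adjoined variables in each homological degree gives $\varepsilon_1(R) = n$, $\varepsilon_2(R) = c$ and $\varepsilon_i(R) = 0$ for all $i \ge 3$; equivalently $\mathcal{P}(R,x) = (1+x)^n / (1-x^2)^c$, which in particular forces $\varepsilon_i(R) = 0$ for $i \gg 0$.

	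For $(2) \Rightarrow (1)$ I would first reduce to the following purely homological assertion: \emph{if $\varepsilon_i(R) = 0$ for $i \gg 0$, then in fact $\varepsilon_i(R) = 0$ for every $i \ge 3$}. Granting this, the acyclic closure of $\kk$ over $R$ involves only variables in homological degrees $1$ and $2$; a classical structural result going back to Assmus and Tate then forces the Koszul homology algebra $\Tor^Q(R, \kk) \cong H_\bullet(K^R)$ to be the exterior algebra on $\Tor_1^Q(R, \kk)$, and this property is equivalent to $R$ being a complete intersection. So everything comes down to the displayed rigidity statement, and this is the step I expect to be the main obstacle.

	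To attack the rigidity statement I would argue by contradiction: assume $\varepsilon_j(R) \neq 0$ for some $j \ge 3$, take $j$ minimal with this property, and show that the variable of homological degree $j$ in the acyclic closure forces nonzero deviations in infinitely many higher homological degrees, contradicting $(2)$. The model case is $j = 2m$ even, where the acyclic closure contains a divided-power variable $w$ of homological degree $2m$, and one analyzes how its divided powers $w^{(k)}$ --- which sit in homological degree $2mk$ --- obstruct acyclicity and thereby force fresh variables in unboundedly many homological degrees. The odd case can be handled similarly, or reduced to the even one via the bracket in the homotopy Lie algebra $\pi(R)$, whose graded dimensions are exactly the deviations $\varepsilon_i(R)$. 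Turning this ``no gap'' heuristic into a proof --- controlling the divided-power (resp.\ Lie) structure precisely enough to certify that the newly created cycles are genuine non-boundaries --- is the technical heart of the matter; it is Gulliksen's theorem \cite{gulliksen1971ci}, later sharpened by Halperin's rigidity theorem \cite[Theorem 7.3.4]{avramov1998infinite}, and an equivalent route passes through the Andr\'e--Quillen homology $D_\bullet(R / \kk; \kk)$ and the fact that it vanishes in all degrees $\ge 3$ as soon as it vanishes in high degrees. I would cite this step rather than reprove it; see also \cite[Theorem 7.3.3]{avramov1998infinite}.
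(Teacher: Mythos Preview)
The paper does not prove this theorem at all: it is stated as a classical result due to Gulliksen \cite{gulliksen1971ci} and simply cited, with a pointer to \cite[Theorem~7.3.3]{avramov1998infinite} and to Halperin's sharper rigidity theorem \cite[Theorem~7.3.4]{avramov1998infinite}. Your sketch is correct and in fact more informative than what the paper offers. The $(1)\Rightarrow(2)$ argument via Tate's resolution is the standard one and is carried out accurately; your outline of $(2)\Rightarrow(1)$ correctly isolates the rigidity phenomenon (no gaps in the deviations once $\varepsilon_j\neq 0$ for some $j\ge 3$) as the crux, and you end up citing exactly the same sources the paper does. There is nothing to correct; if anything, your write-up could serve as a useful expansion of the paper's bare citation.
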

	
	Corollary \ref{cyclotomic iff cyc exp seq 0} and Theorem \ref{thm:gulliksen_eventually_zero} exhibit a formal similarity. Such a similarity becomes substantial when $R$ is a Koszul algebra, as the following result shows.
	
	\begin{proposition} \label{prop:koszul_cyclotomic_deviations}
		Let $R$ be a Koszul algebra of Krull dimension $d$. Then
		\[ e_i(R) =
		\begin{cases}
		-\varepsilon_1(R) + d & i = 1 \\
		(-1)^i \varepsilon_i(R) & i > 1.
		\end{cases}
		\]
		In particular, $e_i(R) = 0$ for $i \gg 0 \Longleftrightarrow \varepsilon_i(R) = 0$ for $i \gg 0$.
	\end{proposition}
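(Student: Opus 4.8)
The plan is to deduce the formula from Fröberg's description of the Koszul property together with the uniqueness of the exponents appearing in a factorization of the type recorded in \eqref{cyclotomic factorization}.

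\emph{Step 1: rewrite the Poincar\'e series in terms of $N_R$.} Since $R$ is standard graded of Krull dimension $d$, its reduced Hilbert series has the form $H(R,x) = N_R(x)/(1-x)^d$. For an arbitrary standard graded algebra one has the formal identity
\[ H(R,x) \cdot \Bigl( \sum_{i,j} (-1)^i \dim_\kk \Tor^R_i(\kk,\kk)_j \, x^j \Bigr) = 1, \]
obtained by comparing Hilbert functions degree by degree along the minimal graded free resolution of $\kk$ (in each fixed degree only finitely many homological steps contribute). When $R$ is Koszul, the vanishing $\Tor^R_i(\kk,\kk)_j = 0$ for $i \neq j$ collapses the inner sum to $\mathcal{P}(R,-x)$, and we recover Fr\"oberg's formula $H(R,x)\,\mathcal{P}(R,-x) = 1$ (see \cite{froberg1999koszul}). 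Substituting $-x$ for $x$ then gives
\[ \mathcal{P}(R,x) = \frac{1}{H(R,-x)} = \frac{(1+x)^d}{N_R(-x)}. \]

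\emph{Step 2: insert the cyclotomic factorization of $N_R$.} Writing $N_R(x) = \prod_{i \geq 1}(1-x^i)^{e_i(R)}$ and separating the factors according to the parity of $i$, using $1-(-x)^i = 1+x^i$ for $i$ odd and $1-(-x)^i = 1-x^i$ for $i$ even, one obtains
\[ \mathcal{P}(R,x) = (1+x)^{d - e_1(R)} \cdot \prod_{\substack{i \geq 3 \\ i \text{ odd}}} (1+x^i)^{-e_i(R)} \cdot \prod_{i \text{ even}} (1-x^i)^{-e_i(R)}. \]

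\emph{Step 3: match exponents with the deviations.} By definition one also has $\mathcal{P}(R,x) = \prod_{i \text{ odd}}(1+x^i)^{\varepsilon_i(R)} \cdot \prod_{i \text{ even}}(1-x^i)^{-\varepsilon_i(R)}$. Replacing each factor $1+x^j$ by $(1-x^{2j})/(1-x^j)$, both expressions for $\mathcal{P}(R,x)$ become factorizations of the shape $\prod_{i \geq 1}(1-x^i)^{f_i}$ of one and the same power series with constant term $1$; by the uniqueness asserted in Remark \ref{rmk:cyclotomic_sequence} the resulting exponents agree in every degree, and hence so do the original exponents in the odd/even product form. Comparing the exponent of $1+x$ gives $\varepsilon_1(R) = d - e_1(R)$; comparing the exponent of $1+x^i$ for odd $i \geq 3$ gives $\varepsilon_i(R) = -e_i(R)$; and comparing the exponent of $1-x^i$ for even $i$ gives $-\varepsilon_i(R) = -e_i(R)$. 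Since $(-1)^i = -1$ for $i$ odd and $(-1)^i = 1$ for $i$ even, the last two relations both read $e_i(R) = (-1)^i \varepsilon_i(R)$, while the first rearranges to $e_1(R) = -\varepsilon_1(R) + d$. The final assertion then follows at once, the two sequences differing only in index $1$ and, beyond that, only by a sign.

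The only point requiring genuine care is the bookkeeping in Step 3: one must verify that the substitutions $1+x^j = (1-x^{2j})/(1-x^j)$ turn each of the two product expressions into an honest factorization $\prod_{i}(1-x^i)^{f_i}$ with well-defined (finite) exponents in each degree, so that Remark \ref{rmk:cyclotomic_sequence} really applies; equivalently, one checks that the passage between the odd/even product form and the $\prod_i (1-x^i)^{f_i}$ form is a bijection. Once this is in place, the rest is a direct substitution.
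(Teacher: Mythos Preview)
Your proof is correct and follows essentially the same approach as the paper: combine Fr\"oberg's identity $H(R,x)\,\mathcal{P}(R,-x)=1$ with the uniqueness of the factorization in Remark~\ref{rmk:cyclotomic_sequence}. The only difference is cosmetic: the paper substitutes $-x$ into $\mathcal{P}$ (so that both sides are already products of factors $(1-x^i)$ and uniqueness applies immediately), whereas you substitute $-x$ into $N_R$ and then need the extra bookkeeping in Step~3 to pass between the mixed $(1+x^i)/(1-x^i)$ form and the pure $(1-x^i)$ form.
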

	\begin{proof}
		Note that $N_R(x) = h(R,x) = (1-x)^d H(R,x)$ because $R$ is standard graded. Further, since $R$ is Koszul, from \cite[Theorem 1]{froberg1999koszul} we have that $H(R,x) \mathcal{P}(R,-x) = 1$. Now write
		\[
		\begin{split}
		& \phantom{=} \prod_{i=1}^{+\infty}(1-x^i)^{e_i(R)} = h(R, x) = (1-x)^d H(R,x) = \frac{(1-x)^d}{\mathcal{P}(R, -x)} = \\
		& = (1-x)^d \frac{\displaystyle\prod_{j=1}^{+\infty}(1-x^{2j})^{\varepsilon_{2j}(R)}}{\displaystyle\prod_{j=1}^{+\infty}(1-x^{2j-1})^{\varepsilon_{2j-1}(R)}} = (1-x)^d \prod_{i=1}^{+\infty}(1-x^i)^{(-1)^i\varepsilon_i(R)}
		\end{split}
		\]
		and the claim follows.
	\end{proof}

	\begin{theorem} \label{thm:cyclotomic_koszul}
		If $R$ is a Koszul algebra, then $R$ is cyclotomic if and only if it is a complete intersection.
	\end{theorem}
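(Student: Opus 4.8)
The plan is to simply chain together the results already assembled in this section, since all the substantive work has been front-loaded into the preparatory statements. For the implication \enquote{complete intersection $\Rightarrow$ cyclotomic}, nothing beyond Proposition \ref{prop:cyclotomic_ci} is needed, and in fact the Koszul hypothesis plays no role here: every graded complete intersection is cyclotomic regardless.

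For the converse, I would argue as follows. Suppose $R$ is a cyclotomic Koszul algebra of Krull dimension $d$. By Corollary \ref{cyclotomic iff cyc exp seq 0}, the cyclotomic exponent sequence satisfies $e_i(R) = 0$ for $i \gg 0$. Now invoke Proposition \ref{prop:koszul_cyclotomic_deviations}, whose content is precisely that for a Koszul algebra the vanishing of $e_i(R)$ for large $i$ is equivalent to the vanishing of the deviations $\varepsilon_i(R)$ for large $i$ (the link being Fr\"oberg's identity $H(R,x)\mathcal{P}(R,-x) = 1$, which forces $h(R,x) = (1-x)^d H(R,x) = (1-x)^d \prod_i (1-x^i)^{(-1)^i \varepsilon_i(R)}$, so that $e_i(R)$ and $(-1)^i \varepsilon_i(R)$ differ only in degree $1$). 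Hence $\varepsilon_i(R) = 0$ for $i \gg 0$, and Theorem \ref{thm:gulliksen_eventually_zero} (Gulliksen's criterion) then yields that $R$ is a complete intersection.

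I do not expect a genuine obstacle: the theorem is a formal corollary of the chain Corollary \ref{cyclotomic iff cyc exp seq 0} $\to$ Proposition \ref{prop:koszul_cyclotomic_deviations} $\to$ Theorem \ref{thm:gulliksen_eventually_zero}, together with Proposition \ref{prop:cyclotomic_ci} for the trivial direction. If anything merits a word of care, it is only to note that the equality $h(R,x) = (1-x)^d H(R,x)$ uses that $R$ is standard graded (which is part of the definition of Koszul), so that $N_R = h(R,x)$ and the two exponent sequences $(e_i)$ and $(\varepsilon_i)$ live in comparable ambient factorizations; this is exactly what is checked in the proof of Proposition \ref{prop:koszul_cyclotomic_deviations}, so the present proof can afford to be a one-line citation.
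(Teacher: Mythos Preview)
Your proposal is correct and matches the paper's own proof essentially verbatim: the paper likewise derives the theorem directly from Corollary \ref{cyclotomic iff cyc exp seq 0}, Proposition \ref{prop:koszul_cyclotomic_deviations}, and Theorem \ref{thm:gulliksen_eventually_zero}. The only difference is cosmetic---you spell out the chain and separately cite Proposition \ref{prop:cyclotomic_ci} for the easy direction, whereas the paper compresses everything into a single sentence.
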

	\begin{proof}
		The result follows directly from Proposition \ref{prop:koszul_cyclotomic_deviations}, Corollary \ref{cyclotomic iff cyc exp seq 0} and Theorem \ref{thm:gulliksen_eventually_zero}.
	\end{proof}
	
	\begin{remark}
		One may also show that, if $R$ is Koszul and cyclotomic, then the Betti numbers of the residue field $\kk$ as an $R$-module ``do not grow too fast'', i.e.~it holds that
		\[\curv_R(\kk) := \limsup_{n \to +\infty}{\sqrt[n]{\beta_n^R(\kk)}} \leq 1.\]
		By \cite[Corollary 8.2.2]{avramov1998infinite}, this implies that $R$ is a complete intersection; however, the proof of \cite[Corollary 8.2.2]{avramov1998infinite} still relies on deviations.
	\end{remark}
	
	\section{Algebras with irreducible $h$-polynomial}\label{sec:irreducible}
	
	Let $R$ be a cyclotomic graded algebra and assume that $N_R$ is irreducible over $\mathbb{Q}$. This means that $N_R(x) = \Phi_m(x)$ 
	for some $m \in \mathbb{N}$. Under this condition, in the case when $R = \kk[S]$ for some numerical semigroup $S$ (and hence $N_R$ equals the semigroup polynomial $P_S$), it is proved in \cite{borzi2020cyclotomic} that then $S = \langle p,q \rangle$ for some primes $p \neq q$, and consequently $m = pq$. Since each numerical semigroup of the form $\langle p,q \rangle$ is a complete intersection, this implies in particular that Conjecture \ref{conjecture ns} holds true when $P_S$ is irreducible. This prompts the following questions:
	
	\begin{question}\label{question:irreducible_conjecture}
		Let $R$ be a graded algebra and assume that the numerator $N_R$ of its reduced Hilbert series is irreducible. Is it true that $R$ is cyclotomic if and only if it is a complete intersection?
	\end{question}
	
	\begin{question} \label{question:irreducible_cyclotomic}
		Which cyclotomic graded algebras $R$ have a Hilbert series whose numerator is irreducible, i.e.~$N_R(x) = \Phi_m(x)$ for some $m$?
	\end{question}
	
	The aim of this section is to answer both of the above questions in the case when $R$ is standard graded.
	
	\begin{theorem} \label{thm:cyclotomic_standard_graded}
		Let $R$ be a standard graded algebra. Then $h(R, x) = \Phi_m(x)$ if and only if $m$ is prime and $R$ is a hypersurface of degree $m$.
	\end{theorem}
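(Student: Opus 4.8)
The plan is to prove the two implications separately; the ``if'' direction is a one-line computation. Indeed, if $R = \kk[y_1,\dots,y_N]/(f)$ with $\deg f = m$ and $m$ prime, then $H(R,x) = (1-x^m)/(1-x)^N = (1+x+\cdots+x^{m-1})/(1-x)^{N-1}$, which is already in lowest terms since its numerator does not vanish at $1$; hence $h(R,x) = 1+x+\cdots+x^{m-1} = \prod_{d\mid m,\ d>1}\Phi_d(x) = \Phi_m(x)$, the last equality because $m$ is prime.

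For the ``only if'' direction, I would write $R = S/I$ with $S = \kk[x_1,\dots,x_n]$ a standard graded polynomial ring on $n = \dim_\kk R_1$ variables, so that $I\subseteq\mathfrak m_S^2$, and set $c := n - \dim R$, which equals the height of $I$. The minimal graded $S$-free resolution of $R$ gives $(1-x)^n H(R,x) = \sum_{i,j}(-1)^i\beta_{ij}^S(R)\,x^j =: K_R(x)$, and comparing with $H(R,x) = \Phi_m(x)/(1-x)^{\dim R}$ yields $K_R(x) = (1-x)^c\Phi_m(x)$. First I would determine $c$: since $I$ contains no linear form, the coefficient of $x$ in $K_R(x)$ is $\beta^S_{01}-\beta^S_{11}=0$, whereas the coefficient of $x$ in $(1-x)^c\Phi_m(x)$ equals $-c-\mu(m)$ (because $\Phi_m(x)\equiv 1-\mu(m)x\pmod{x^2}$ for $m>1$); hence $c = -\mu(m)$. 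Since $h(R,1)=\Phi_m(1)$ is a positive integer (the multiplicity of $R$, or $\dim_\kk R$ when $\dim R=0$), we get $m\neq 1$ and $\deg\Phi_m\geq 1$, so $c=0$ would force $I=0$ and $h(R,x)=1\neq\Phi_m(x)$, a contradiction. Therefore $c=1$ and $m$ is squarefree.

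Now $\operatorname{ht} I = 1$, so $g := \gcd(I)$ has positive degree $\ell$ and $I = gJ$ with $\gcd(J)=1$. If $J = S$ then $I = (g)$, so $R$ is a hypersurface of degree $\ell$ with $h(R,x) = 1+x+\cdots+x^{\ell-1} = \prod_{d\mid\ell,\ d>1}\Phi_d(x)$; irreducibility of $\Phi_m$ over $\mathbb Q$ then forces $\ell$ to be prime and $\ell = m$, which is exactly the desired conclusion. So it remains to rule out the case $J\subsetneq S$. In that case $\operatorname{ht} J\geq 2$ (a height-one minimal prime of $J$ would be principal, as $S$ is a UFD, contradicting $\gcd(J)=1$), hence $\dim(S/J)\leq n-2$; the short exact sequence $0\to (S/J)(-\ell)\to R\to S/(g)\to 0$ gives $H(R,x) = x^\ell H(S/J,x) + (1-x^\ell)/(1-x)^n$, and combining with $H(R,x)=\Phi_m(x)/(1-x)^{n-1}$ and clearing denominators produces the polynomial identity
\[
\Phi_m(x) - (1+x+\cdots+x^{\ell-1}) = x^\ell\,h(S/J,x)\,(1-x)^k,\qquad k := n-1-\dim(S/J)\geq 1 .
\]
Evaluating at $x=1$ gives $\ell = \Phi_m(1)$. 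If $m$ is prime, then $\Phi_m(x) = 1+x+\cdots+x^{m-1}$ and $\ell=m$, so the left-hand side is identically zero, forcing $h(S/J,x)=0$ and thus $S/J=0$, contrary to $J\subsetneq S$. If $m$ is composite (and squarefree), then $\Phi_m(1)=1$, so $\ell=1$; the root $x=1$ of $\Phi_m(x)-1$ is simple since $\Phi_m'(1)=\varphi(m)/2\neq 0$ (by palindromicity of $\Phi_m$), whence $k=1$ and $h(S/J,x) = (\Phi_m(x)-1)/(x(1-x))$, whose value at $x=1$ is $-\varphi(m)/2<0$ --- impossible, since $h(S/J,1)$ is again a positive integer. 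Hence $J\subsetneq S$ cannot occur, $I$ is principal, and the proof concludes.

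I expect the last step --- excluding nonprincipal height-one ideals --- to be the main obstacle, because the cyclotomic hypothesis enters only through coarse invariants of $\Phi_m$ (the coefficient of $x$, and the values $\Phi_m(1)$ and $\Phi_m'(1)$). What makes it work is that splitting off $g = \gcd(I)$ isolates a genuine standard graded quotient $S/J$ whose $h$-polynomial is pinned down by $\Phi_m$ and is forced either to vanish or to take a negative value at $1$, both absurd.
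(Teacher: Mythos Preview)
Your proof is correct, and it takes a genuinely different route from the paper's.

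The paper argues in the order ``$m$ is a prime power $\Rightarrow$ $m$ is prime $\Rightarrow$ $R$ is a hypersurface''. The first step is a separate lemma (Lemma~\ref{lem:numerator not 1}): if $h(R,x)$ is palindromic of even degree then $h(R,1)>1$, proved by passing to a Stanley--Reisner ring and comparing $f$- and $h$-vectors; this forces $\Phi_m(1)>1$, so $m=p^k$. The second step compares $\dim_\kk R_1$ and $\dim_\kk R_q$ for $q=p^{k-1}$ to rule out $k>1$. The third step is another lemma (Lemma~\ref{lem:hypersurface}) using the $K$-polynomial to conclude that a standard graded algebra with $h(R,x)=1+x+\dots+x^{s-1}$ is a hypersurface.

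You reverse the order: first pin down the codimension, then the shape of $m$. Your use of the identity $\Phi_m(x)\equiv 1-\mu(m)x \pmod{x^2}$ to read off $c=-\mu(m)$ directly from the $x$-coefficient of $K_R$ is a nice shortcut that immediately gives $c=1$ and $m$ squarefree, bypassing the Stanley--Reisner argument entirely. The UFD factorisation $I=gJ$ together with the short exact sequence $0\to(S/J)(-\ell)\to R\to S/(g)\to0$ is the genuinely new idea: it reduces the non-hypersurface case to an explicit formula for $h(S/J,x)$ in terms of $\Phi_m$, and the contradiction then comes from positivity of the multiplicity of $S/J$ (using only $\Phi_m(1)$ and $\Phi_m'(1)$).

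What each approach buys: the paper's route isolates a reusable combinatorial lemma (palindromic even-degree $h$-polynomial forces multiplicity $>1$) that may be of independent interest, at the cost of invoking polarisation and $f$-vector identities. Your route is more self-contained and purely algebraic; it never leaves the minimal free resolution and the UFD structure of $S$, and it identifies the principal part $g=\gcd(I)$ explicitly rather than deducing it at the end. A minor remark: your phrase ``$m$ is squarefree'' actually undersells what you proved, since $c=-\mu(m)=1$ gives $\mu(m)=-1$, i.e.\ $m$ is a product of an odd number of distinct primes; this is harmless because your endgame handles the composite squarefree case uniformly.
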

	
	As a consequence, Question \ref{question:irreducible_conjecture} has a positive answer when $R$ is standard graded. \begin{comment} In other words, the following corollary holds:
	
	\begin{corollary} \label{cor:cyclotomic_irreducible}
	If $R$ is a standard graded algebra whose $h$-polynomial is irreducible over $\mathbb{Q}$, then $R$ is cyclotomic if and only if it is a complete intersection.
	\end{corollary}
	
	\end{comment}
	To prove Theorem \ref{thm:cyclotomic_standard_graded}, we will need some auxiliary results. First, we recall some basic properties of cyclotomic polynomials.
	
	\begin{lemma} \label{lem:cyclotomic_at_1} \ 
		\begin{itemize}
			\item[\emph{(a)}] Let $m > 1$. Then $\Phi_m(1) =
			\begin{cases}
			p & \textrm{if } m = p^k \textrm{ for some prime }p\\
			1 & \textrm{otherwise.}
			\end{cases}
			$
			\item[\emph{(b)}] For any prime $p$ and any $k \geq 2$, one has that $\Phi_{p^k}(x) = \Phi_{p}(x^{p^{k-1}})$.
		\end{itemize}
	\end{lemma}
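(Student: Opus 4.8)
The plan is to deduce both parts from the basic identity $x^n - 1 = \prod_{d \mid n} \Phi_d(x)$ together with the fact that $\Phi_p(x) = 1 + x + \cdots + x^{p-1}$ for a prime $p$.

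I would establish (b) first. Since the (positive) divisors of $p^k$ are exactly $1, p, \dots, p^k$, the basic identity gives $x^{p^k} - 1 = \prod_{j=0}^{k} \Phi_{p^j}(x)$, and likewise $x^{p^{k-1}} - 1 = \prod_{j=0}^{k-1} \Phi_{p^j}(x)$; dividing the first relation by the second leaves $\Phi_{p^k}(x) = (x^{p^k} - 1)/(x^{p^{k-1}} - 1)$. Setting $y = x^{p^{k-1}}$, so that $x^{p^k} = y^p$, turns the right-hand side into $(y^p - 1)/(y - 1) = \Phi_p(y) = \Phi_p(x^{p^{k-1}})$, which is precisely (b).

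For (a) I would argue by strong induction on $m$. If $m = p$ is prime, then $\Phi_p(1) = 1 + 1 + \cdots + 1 = p$; if $m = p^k$ with $k \geq 2$, then specializing the identity in (b) at $x = 1$ gives $\Phi_{p^k}(1) = \Phi_p(1) = p$. This settles the prime-power case. Suppose now that $m = p_1^{a_1} \cdots p_r^{a_r}$ is not a prime power, i.e.~$r \geq 2$. Evaluating $1 + x + \cdots + x^{m-1} = \prod_{1 < d \mid m} \Phi_d(x)$ at $x = 1$ yields $\prod_{1 < d \mid m} \Phi_d(1) = m$. By the prime-power case just proved, the divisors of $m$ of the form $p_i^{\,j}$ with $1 \leq j \leq a_i$ contribute $\prod_{i=1}^{r} \prod_{j=1}^{a_i} p_i = \prod_{i=1}^{r} p_i^{a_i} = m$ to this product, so the remaining factors---those indexed by divisors of $m$ that are neither $1$ nor a prime power---multiply to $1$. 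Each such divisor other than $m$ is strictly smaller than $m$ and is not a prime power, hence contributes $1$ by the inductive hypothesis; therefore $\Phi_m(1) = 1$.

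I do not anticipate a genuine obstacle: once (b) has been used to handle prime powers, part (a) is bookkeeping with the divisor identity, the one point requiring care being the verification that the prime-power divisors of $m$ account for exactly a factor of $m$. (Alternatively, one can obtain (a) in closed form from $\Phi_m(x) = \prod_{d \mid m}(1 + x + \cdots + x^{d-1})^{\mu(m/d)}$---valid for $m > 1$ since $\sum_{d \mid m} \mu(m/d) = 0$---which at $x = 1$ reads $\Phi_m(1) = \prod_{d \mid m} d^{\,\mu(m/d)}$; taking logarithms identifies the exponent with the von Mangoldt function $\Lambda(m)$, so $\Phi_m(1) = e^{\Lambda(m)}$, which equals $p$ when $m = p^k$ and $1$ otherwise.)
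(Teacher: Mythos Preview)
Your argument is correct in both parts. The derivation of (b) from the divisor factorization of $x^{p^k}-1$ is clean, and the strong induction for (a) goes through: the prime-power divisors of $m$ indeed contribute exactly $m$ to $\prod_{1<d\mid m}\Phi_d(1)$, so the remaining factors multiply to $1$, and the inductive hypothesis applied to the proper non-prime-power divisors isolates $\Phi_m(1)=1$. The alternative route via M\"obius inversion and the von~Mangoldt function is also valid.

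As for comparison with the paper: there is nothing to compare, since the paper does not prove this lemma at all. It is introduced with ``we recall some basic properties of cyclotomic polynomials'' and stated without argument, treating both (a) and (b) as standard number-theoretic facts. Your write-up therefore supplies a self-contained justification that the paper simply assumes.
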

	
	The following lemma is a generalization of \cite[Theorem 3.6]{stanley1978hilbert}.
	
	\begin{lemma} \label{lem:hypersurface}
		Let $R$ be a standard graded $\kk$-algebra such that 
		\begin{equation} \label{eq:reduced_hilbert}
		H(R, x) = \frac{1+x+x^2+\ldots+x^{s-1}}{(1-x)^d}
		\end{equation}
		for some $s>1$, $d \geq 0$. Then $R = \kk[x_1, \ldots, x_{d+1}]/(f)$ for some homogeneous polynomial $f$ of degree $s$.
	\end{lemma}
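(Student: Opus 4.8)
The plan is to realize $R$ as a quotient of a polynomial ring in exactly $d+1$ variables and then pin down the defining ideal by a Hilbert-series comparison with a principal ideal. First I would rewrite the hypothesis, using $1+x+\cdots+x^{s-1}=(1-x^s)/(1-x)$, as
\[ H(R,x)=\frac{1-x^s}{(1-x)^{d+1}}. \]
Expanding this as a power series and using $s>1$, the coefficient of $x$ equals $d+1$, so $\dim_\kk R_1=d+1$. Since $R$ is standard graded, picking a $\kk$-basis of $R_1$ exhibits $R$ as a quotient $S/I$ of the standard graded polynomial ring $S=\kk[x_1,\ldots,x_{d+1}]$ by a homogeneous ideal $I$.

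Next, from $H(S/I,x)=H(R,x)=(1-x^s)\,H(S,x)$ together with the short exact sequences $0\to I_j\to S_j\to (S/I)_j\to 0$ in each degree, I would read off
\[ \dim_\kk I_j=\dim_\kk S_j-\dim_\kk (S/I)_j=\dim_\kk S_{j-s}\qquad\text{for all }j, \]
with the convention $S_{j-s}=0$ for $j<s$. In particular $I$ contains no nonzero form of degree $<s$, while $\dim_\kk I_s=\dim_\kk S_0=1$; so I may fix a nonzero $f\in I_s$, a form of degree exactly $s$.

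Finally, since $S$ is a domain, $f$ is a nonzerodivisor, so the exact sequence $0\to S(-s)\xrightarrow{\,\cdot f\,} S\to S/(f)\to 0$ yields $H(S/(f),x)=(1-x^s)\,H(S,x)=H(R,x)$. The inclusion $(f)\subseteq I$ induces a graded surjection $S/(f)\twoheadrightarrow S/I=R$, which must be an isomorphism in every degree because the two sides have the same Hilbert function; hence $(f)=I$ and $R\cong \kk[x_1,\ldots,x_{d+1}]/(f)$ with $\deg f=s$.

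I do not expect a genuine obstacle here. The only points requiring a moment's care are the dimension count $\dim_\kk R_1=d+1$, which is exactly where the hypothesis $s>1$ enters (for $s=1$ one would instead get $R=\kk[x_1,\ldots,x_d]$), and the elementary principle that a containment of homogeneous ideals whose quotients share the same Hilbert function is an equality. For $d=0$ this recovers Stanley's statement that an Artinian standard graded algebra with Hilbert series $1+x+\cdots+x^{s-1}$ is isomorphic to $\kk[x]/(x^s)$, so Lemma \ref{lem:hypersurface} is indeed the announced generalization of \cite[Theorem 3.6]{stanley1978hilbert}.
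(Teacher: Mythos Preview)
Your proof is correct and follows a somewhat different, more elementary route than the paper's. The paper writes $R=\kk[x_1,\ldots,x_n]/I$ with $n$ the embedding dimension and $I\subseteq(x_1,\ldots,x_n)^2$, invokes Hilbert's syzygy theorem to express the Hilbert series as $\mathcal{K}(R,x)/(1-x)^n$ with $\mathcal{K}(R,x)=\sum_{i,j}(-1)^i\beta^S_{i,j}(R)x^j$, and deduces $\mathcal{K}(R,x)=(1-x^s)(1-x)^{n-d-1}$; since $I$ contains no linear forms, the coefficient of $x$ in $\mathcal{K}$ must vanish, forcing $n=d+1$. The final step, that $\mathcal{K}(R,x)=1-x^s$ implies $I$ is principal of degree $s$, is left as an exercise via a Betti-number argument. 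By contrast, you read $\dim_\kk R_1=d+1$ directly from the power-series expansion, avoid free resolutions entirely, and close with the clean observation that $(f)\subseteq I$ together with $H(S/(f),x)=H(S/I,x)$ forces equality. The paper's approach has the merit of displaying the $K$-polynomial explicitly (which is thematically in line with its later use of Poincar\'e series and deviations), while yours is shorter and uses nothing beyond graded dimension counts; in particular, it makes the ``exercise'' at the end of the paper's proof entirely transparent.
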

	
	\begin{proof}
		Write $R$ as the quotient of a standard graded polynomial ring $S = \kk[x_1, \ldots, x_n]$ by a homogeneous ideal $I \subseteq (x_1, \ldots, x_n)^2$. Here $n$ is the embedding dimension of $R$, which is strictly greater than $d$ since $R$ is not a polynomial ring itself. It follows from Hilbert's syzygy theorem that the Hilbert series of $R$ can be written in a non-reduced way as
		\begin{equation} \label{eq:K_hilbert}
		H(R, x) = \frac{\mathcal{K}(R, x)}{(1-x)^n},
		\end{equation}
		where $\mathcal{K}(R, x) = \sum_{i,j} (-1)^i \beta_{i,j}^S(R) x^j$ and $\beta_{i,j}^S(R) := \dim_{\kk} \Tor^S_i(R, \kk)_j$ is the $(i,j)$-th Betti number of $R$ as an $S$-module. Comparing Equations \eqref{eq:reduced_hilbert} and \eqref{eq:K_hilbert}, we have that \[\mathcal{K}(R, x) = (1+x+x^2+\ldots+x^{s-1})(1-x)^{n-d} = (1-x^s)(1-x)^{n-d-1}.\]
		Now, if $n-d-1 > 0$, it follows that the coefficient of $x$ in $\mathcal{K}(R, x)$ is nonzero, which is impossible since $I$ does not contain any linear form. Hence, $n=d+1$ and $\mathcal{K}(R, x) = 1-x^s$. It is left as an exercise to the reader to check that $I$ must then be minimally generated by a single homogeneous polynomial of degree $s$. 
		\begin{comment}
		To see this, take a minimal free resolution of $S/I$ and let $d_1 \leq d_2 \leq \ldots \leq d_r$ be the (not necessarily distinct) degrees of a set of minimal generators of $I$. Then $\beta^S_{1, d_1}(R) \neq 0$ and, since the resolution is minimal, $\beta^S_{i, d_1}(R) = 0$ for every $i > 1$. In particular $x^{d_1}$ appears inside the $K$-polynomial: this implies that $d_1 = s$ and $\beta^S_{1,d_1}(R) = 1$. Now, if $I$ is not principal, $d_2 > d_1 = s$ (otherwise $\beta^S_{1, d_1}(R) > 1$). Since $d_j \geq d_2$ for every $j \geq 2$, it follows that $\beta^S_{i, d_2}(R) = 0$ for every $i > 1$. Finally, given that $\beta^S_{1, d_2}(R) \neq 0$, the monomial $x^{d_2}$ also appears in the $K$-polynomial, which is a contradiction. Hence $I$ is principal and generated in degree $s$, as desired.
		\end{comment}
	\end{proof}
	
	\begin{lemma}\label{lem:numerator not 1}
		Let $R$ be a standard graded $\kk$-algebra of Krull dimension $d$. If the $h$-polynomial of $R$ is palindromic of even degree $s$, then $h(R,1) > 1$.
	\end{lemma}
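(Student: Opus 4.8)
The plan is to suppose that $h(R,1) = 1$ and reach a contradiction with the palindromy hypothesis. We may assume the base field $\kk$ is infinite, since extending scalars changes neither the Hilbert series nor the Krull dimension $d$ of $R$. The key preliminary observation is that $R$ contains a polynomial subring on $d$ linear forms: a generic choice of linear forms $\ell_1,\dots,\ell_d \in R_1$ is a homogeneous system of parameters, so $R$ is module-finite over $A := \kk[\ell_1,\dots,\ell_d]$; hence $\dim A = d$, which forces the $\ell_i$ to be algebraically independent over $\kk$, so $A$ is a polynomial ring with $H(A,x) = (1-x)^{-d}$. Because $A$ is a graded subalgebra we have $\dim_\kk A_k \le \dim_\kk R_k$ for every $k$, and therefore the power series
\[ G(x) \;:=\; H(R,x) - \frac{1}{(1-x)^d} \;=\; \frac{h(R,x)-1}{(1-x)^d} \]
has non-negative coefficients. (Alternatively one can reach this inequality by passing to $R/\mathfrak{p}$ for a minimal prime $\mathfrak{p}$ of maximal dimension and using that a standard graded domain of multiplicity one is a polynomial ring, but the argument above is more self-contained.)

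Write $s$ for the degree of $h(R,x)$, which by hypothesis is even and --- as $s=0$ would give $h(R,x)=1$, excluded --- we take to be $\ge 2$; note $h_0 = h_s = 1$ by palindromy. The case $d = 0$ is immediate: there $h(R,x)=H(R,x)$ has non-negative coefficients, so $h(R,1)\ge h_0 + h_s = 2$. Assume then $d \ge 1$. The function $G$ is finite and non-negative on $[0,1)$ (its only possible pole is at $x=1$), and letting $x\to 1^-$ one already sees $h(R,1)\ge 1$, since otherwise the numerator $h(R,x)-1$ would tend to a negative limit while $(1-x)^d \to 0^+$. Now suppose $h(R,1) = 1$. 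Then $(1-x)$ divides $h(R,x)-1$, so we may write $h(R,x)-1 = (1-x)\,q(x)$ with $q \in \mathbb{Z}[x]$ of degree $s-1 \ge 1$. Differentiating this identity and evaluating at $x=1$ gives $q(1) = -h'(R,1)$; on the other hand, palindromy yields $\sum_i i\,h_i = \sum_i (s-i)\,h_i$, hence $h'(R,1) = \frac{s}{2}\,h(R,1) = \frac{s}{2}$, so that $q(1) = -\frac{s}{2} < 0$.

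To conclude, note that $q(x) = (1-x)^{d-1}\,G(x)$, which for $x \in [0,1)$ is a product of the non-negative quantities $(1-x)^{d-1}$ and $G(x)$; hence $q(x)\ge 0$ on $[0,1)$, and letting $x \to 1^-$ gives $q(1)\ge 0$, contradicting $q(1)<0$. Therefore $h(R,1)\ne 1$, and combined with $h(R,1)\ge 1$ this gives $h(R,1) > 1$.

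I expect the main point requiring care to be the construction of the polynomial subring $A$ --- specifically the reduction to an infinite base field and the deduction of algebraic independence of a homogeneous system of parameters from module-finiteness --- although both are standard facts from graded Noether normalization; the rest is the elementary observation that multiplying a power series with non-negative coefficients by $(1-x)^{d-1}$ yields something with a non-negative limit at $x=1$, together with bookkeeping on the coefficients of the palindromic polynomial $h(R,x)$. It is also worth recording that the hypothesis $s \ge 2$ cannot be dropped, as the case $h(R,x)=1$ shows.
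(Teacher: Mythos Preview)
Your proof is correct and takes a genuinely different route from the paper's.

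The paper argues combinatorially: it passes to an initial ideal and then polarizes to reduce to the Stanley--Reisner ring of a simplicial complex $\Delta$, and then uses the $f$-vector/$h$-vector relation. From $h(R,1)=1$ it reads off $f_{D-1}=1$, so $\Delta$ has a unique facet; this forces $f_{D-2}\ge D$, while the palindromy of the $h$-vector computes $f_{D-2}=D-\tfrac{s}{2}$, a contradiction.

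Your argument avoids Stanley--Reisner theory entirely. You use graded Noether normalization to embed a polynomial ring $A$ on $d$ linear forms into $R$, yielding the coefficientwise inequality $H(R,x)\ge (1-x)^{-d}$; then a short calculus computation (differentiating $h(R,x)-1=(1-x)q(x)$ and using the palindromy identity $h'(R,1)=\tfrac{s}{2}\,h(R,1)$) gives $q(1)=-\tfrac{s}{2}<0$, contradicting the nonnegativity of $q=(1-x)^{d-1}G$ on $[0,1)$. The numerical heart is the same quantity $-\tfrac{s}{2}$ that appears in the paper's $f_{D-2}$ computation, but you reach it through a more elementary and self-contained route that needs only the existence of a linear system of parameters. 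The paper's approach, by contrast, situates the lemma inside face-enumeration combinatorics, which may be illuminating in that context but requires more background (initial ideals, polarization, $f$- and $h$-vectors).

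Your remark that the case $s=0$ must be excluded is well taken; the paper's proof has the same implicit restriction, as its final inequality $f_{D-2}=D-\tfrac{s}{2}<D$ is non-strict when $s=0$.
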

	\begin{proof}
		The integer $h(R, 1)$ is the multiplicity of $R$ \cite[Definition 4.1.5, Corollary 4.1.9]{bruns1998cohen}, and as such it is positive. Suppose by contradiction that $h(R,1)$ equals $1$, and write $R$ as $\kk[x_1,\dots,x_n]/I$ for some homogeneous ideal $I$. Since the Hilbert series stays the same when passing to the initial ideal, we can assume without loss of generality that $I$ is a monomial ideal. Moreover, since polarization preserves the $h$-polynomial, we can further assume that $I$ is a squarefree monomial ideal; hence, $I$ is the Stanley--Reisner ideal of some simplicial complex $\Delta$.
		
		Let $D-1$ be the dimension of $\Delta$, and set $h(R,x) = \sum_{i=0}^{s}h_ix^i$.
		By construction, the $h$-vector of $\Delta$ is $(h_0, h_1, \ldots, h_D)$, where $h_i = 0$ if $i > s$. Knowing the $h$-vector of $\Delta$ gives us access to its $f$-vector $(f_{-1}, f_0, \ldots, f_{D-1})$, where $f_i$ is the number of $i$-dimensional faces of $\Delta$. As shown for instance in \cite[Corollary 1.15]{miller2004combinatorial}, the transformation is given by
		\begin{equation} \label{eq:h_f_Delta}
		\sum_{i=0}^{D}f_{i-1}(x-1)^{D-i} = \sum_{i=0}^{D}h_ix^{D-i}.
		\end{equation}
		In particular, $f_{D-1} = \sum_{i=0}^{D}h_i = \sum_{i=0}^{s}h_i = h(R,1) = 1$. Since $\Delta$ contains a $(D-1)$-dimensional face, there should be at least $D$ $(D-2)$-dimensional faces, that is $f_{D-2} \geq D$. Substituting $x-1$ by $y$ inside Equation \eqref{eq:h_f_Delta}, we find that $f_{D-2}$ is the coefficient of $y$ in $\sum_{i=0}^{D}h_i(y+1)^{D-i}$. Hence,
		\[\begin{split}f_{D-2} &= Dh_0 + (D-1)h_1 + \ldots + (D-s)h_s\\
		&= (D-s)\sum_{i=0}^{s}h_i + sh_0 + (s-1)h_1 + \ldots + h_{s-1}\\
		&= (D-s) + \frac{s}{2}\sum_{i=0}^{s}h_i = D - \frac{s}{2} < D,
		\end{split}\]
		where the third equality comes from the fact that $h(R,x)$ is palindromic of even degree.
	\end{proof}
	
	\begin{proof}[Proof of Theorem \ref{thm:cyclotomic_standard_graded}]
		The ``if'' part is clear. Let us prove the ``only if''. Suppose that $R = \kk[x_1, \ldots, x_n]/I$ for some homogeneous ideal $I$ contained in $(x_1, \ldots, x_n)^2$. By hypothesis we have that
		\begin{equation} \label{eq:hilbert_hypothesis2}
		H(R, x) = \frac{\Phi_m(x)}{(1-x)^d}
		\end{equation}
		for some $m > 1$ and $d = \dim R \geq 0$. Since $\Phi_m(x)$ is palindromic of even degree, applying Lemma \ref{lem:numerator not 1} yields that $\Phi_m(1) \neq 1$. It follows from part (a) of Lemma \ref{lem:cyclotomic_at_1} that $m=p^k$ for some prime $p$ and $k \geq 1$. Now assume that $k > 1$ and let $q = p^{k-1}$. By part (b) of Lemma \ref{lem:cyclotomic_at_1}, one has that $\Phi_m(x) = \Phi_p(x^q) = 1 + x^q + x^{2q} + \ldots + x^{(p-1)q}$.
		Expanding Equation \eqref{eq:hilbert_hypothesis2} at $x=0$, we get that the standard graded algebra $R$ contains $d$ forms of degree $1$ and $\binom{d+q-1}{q} + 1$ forms of degree $q$, which is impossible. Hence, $m=p$. Now apply Lemma \ref{lem:hypersurface}.
	\end{proof}

	\section*{Acknowledgements}
	The second-named author was supported by the EPSRC grant EP/R02300X/1. The authors are grateful to Aldo Conca and Diane Maclagan for some useful advice on the structure of the present paper; Aldo Conca has also provided Example \ref{ex:2dim}. The authors wish to thank also Benjamin Braun and Winfried Bruns for some fruitful discussions and insights.
	Many computations and examples were carried out via the computer algebra systems Macaulay2 \cite{M2} and Normaliz \cite{Normaliz}.
	
	\bibliographystyle{abbrv}
	\bibliography{CyclotomicReferences}
	
	\noindent
	{\scshape Alessio Borz\`{i}} \quad \texttt{Alessio.Borzi@warwick.ac.uk}\\
	{\scshape  Mathematics Institute, University of Warwick, Coventry CV4 7AL, United Kingdom.}
	
	\medskip
	\noindent
	{\scshape Alessio D'Al\`{i}} \quad \texttt{Alessio.D-Ali@warwick.ac.uk}\\
	{\scshape  Mathematics Institute, University of Warwick, Coventry CV4 7AL, United Kingdom.}
	
\end{document}